\documentclass[11pt]{article}
\usepackage[margin=1 in]{geometry}
\usepackage{xcolor}
\usepackage{relsize}
\usepackage{fullpage}
\usepackage{amsfonts}
\usepackage{graphicx}
\usepackage{subfig}
\usepackage{amsmath,cite}
\usepackage{accents}
\usepackage{latexsym,array,multirow,geometry}
\usepackage{tikz,multicol,enumitem,pgfplots}
\usetikzlibrary{positioning,chains,fit,shapes,calc}
\usetikzlibrary{patterns}
\usepgfplotslibrary{fillbetween}
\usepackage[colorlinks=true,allcolors=blue]{hyperref}
\usepackage[nameinlink,noabbrev,capitalize]{cleveref}
\usepackage{amssymb}
\usepackage{amsthm}
\usepackage{stmaryrd}
\usepackage{tikz-network}
\usepackage{mathtools}

\usepackage[latin1]{inputenc}
\usepackage{tikz}
\usetikzlibrary{3d}
\usepackage{tikz-3dplot}
\usetikzlibrary{shapes,arrows}
\usepackage{adjustbox}

\usepackage{amsmath,amssymb}
\usepackage{amsthm}
\usepackage{mathtools}
\usepackage{tikz-cd}
\usepackage{mathrsfs}
\usepackage{fullpage}
\usepackage{graphicx}
\usepackage{tikz}
\usepackage{dsfont}
\usepackage[english]{babel}
\usepackage{fancyhdr}
\usepackage{wrapfig}
\usepackage{color}
\usepackage{colortbl}
\usepackage{enumitem}
\usepackage{hyperref}
\usepackage{calligra}
\usepackage{multirow, multicol}
\usepackage[linesnumbered,ruled,vlined]{algorithm2e}
\usepackage{caption}
\usepackage{subcaption}
\usepackage{mathrsfs}
\usepackage[authoryear]{natbib}

\newcommand{\cS}{{\mathcal S}}

\def\Prob{{\mathbb P}}

\def\col{{\rm col}}
\def\Event{{\mathcal E}}
\def\Exp{{\mathbb E}}

\def\Var{{\rm Var}}

\newcommand*{\N}{\mathbb{N}}

\newcommand*{\R}{\mathbb{R}}

\newtheorem{theorem}{Theorem}

\newtheorem{prop}[theorem]{Proposition}

\theoremstyle{definition}

\newtheorem{assumption}[theorem]{Assumption}
\newtheorem*{assumption*}{Assumptions}

\newtheorem*{remark}{Remark}

\title{
\textbf{A decomposition algorithm for two-stage stochastic programs with approximate rotational invariance} 
}


\author{Marzieh Bakhshi\footnote{Department of Industrial and System Engineering, University of Tennessee, Knoxville. Email: mbakhsh1@vols.utk.edu}\; and Konstantin Tikhomirov\footnote{Department of Mathematical Sciences, Carnegie Mellon University. Email: ktikhomi@andrew.cmu.edu}}

\date{\today}
\setlength\parindent{0pt}

\begin{document}

\maketitle

\begin{abstract}
We propose an algorithm of approximating the optimal objective value of a two-stage stochastic program under an assumption of {\it approximate rotational invariance} of the technology matrix, and compare the method with the L-shaped decomposition.
\end{abstract}
	
\pagenumbering{roman}
\pagenumbering{arabic}


\section{Introduction}
Consider the general formulation of a two-stage stochastic program 
\begin{equation} \label{sp}
\begin{split}
    &\text{maximize} \  \ \langle c,x \rangle + \Exp_{\boldsymbol{\xi}} [Q(x,\boldsymbol{\xi})] \\
    &\text{subject to }\ Ax \leq b \\
    & x \geq 0.
\end{split}
\end{equation}
where for each realization $\xi$ of a random vector $\boldsymbol{\xi}$, $Q(x,\xi)$ is the optimal objective value of the recourse problem
\begin{equation} \label{sp_recourse}
\begin{split}
    &\text{maximize} \  \ \langle q(\xi),y(\xi) \rangle \\
    &\text{subject to }\ T(\xi)x +W(\xi)y(\xi) \leq h(\xi) \\
    & y(\xi) \geq 0.
\end{split}
\end{equation}
In program (\ref{sp}), $c, x \in \R^{n_1}$ are the first stage cost and variable vectors,  $A \in \R^{m_1 \times n_1}$ encodes first stage constraints, and $b \in \R^{m_1}$ denotes the right hand side of the constraints.
In the recourse problem (\ref{sp_recourse}), $q(\xi), y(\xi) \in \R^{n_2}$ are the second stage cost and variable vectors, and the technology matrix $T(\xi) \in \R^{m_2 \times n_1}$  along with the recourse matrix $W(\xi) \in \R^{m_2 \times n_2}$ determine the quantitative relationship between the first and the second stage variables. The vector $h(\xi) \in \R^{m_2}$ denotes the right hand side of the second stage constraints.  
\smallskip

Combining the recourse problem together with the first stage constraints, and  considering all realizations of $\boldsymbol{\xi}$, we obtain the so-called 
extensive form or deterministic equivalent of problem \eqref{sp}:  
\begin{equation} \label{ef}
\begin{split}
    \text{maximize}\  & \langle c,x \rangle + \sum_{\xi\in\cS} \Prob(\xi)  \langle q(\xi), y(\xi) \rangle  \\
    \text{subject to } & Ax \leq b \\
    & T(\xi) x +W(\xi) y(\xi) \leq h(\xi) \\
    & x \geq 0 \\
    & y(\xi) \geq 0.
\end{split}
\end{equation}
The two-stage problem (\ref{sp}) can be approached by considering the equivalent deterministic problem (\ref{ef}) directly using optimization solvers.
However, problem (\ref{ef}) can be computationally expensive when the number of scenarios (i.e distinct realizations of $\boldsymbol{\xi}$) is large.

\smallskip

To resolve the issue of computational inefficiency, various decomposition algorithms have been proposed and employed in practice.
The L-shaped method ~\citep{van1969shaped} is the most widely used decomposition algorithm for solving two-stage stochastic programs. The method employs Benders' decomposition ~\citep{MR147303, laporte1993integer}. 
The algorithm starts by defining a master problem with only the first-stage decision variables. After the master problem is solved, the first-stage variables are fixed at an optimal solution of the master problem. Then the second stage problem can be decomposed into $|\cS|$ subproblems where $\cS$ denotes the set of scenarios. The subproblems can be solved in parallel, and valid inequalities can be derived and added to master problem. The master problem is then solved again and the algorithm iterates. 
The algorithm is known to converge in a finite number of steps when the second-stage decision variables are continuous and the number of scenarios is finite.

A related decomposition technique is the Lagrangian method ~\citep{guignard2003lagrangean} where non-anticipa\-tivity constraints are typically enforced through the use of Lagrangian multipliers (dual variables) associated with those constraints. The Lagrangian relaxation of the problem incorporates those constraints along with the original constraints and the objective function. The dual variables associated with the non-anticipativity constraints capture the price of violating those constraints, and they are updated iteratively as part of the decomposition process.

\smallskip

Apart from Benders' decomposition and its variations, a substantial body of literature has emerged presenting a variety of decomposition techniques for two-stage stochastic programs.
In particular, one can mention stochastic decomposition ~\citep{higle1991statistical}, subgradient decomposition ~\citep{sen1993subgradient}, level decomposition ~\citep{zverovich2012computational}, and partition-based algorithms ~\citep{song2015adaptive, van2018adaptive}. 





\bigskip

%

Existing decomposition algorithms are typically designed for generic technology matrices, which define the relationship between the first and the second stage variables. However, what happens if the technology matrix has a special structure? Can one exploit that structure for efficient decomposition of stochastic programs? 
In the present work, we focus on two-stage stochastic programs where the technology matrices satisfy an {\it approximate rotational invariance} property introduced below. For such programs, we propose a method which is based on decoupling the first and the second stages.

\section{Approximation of the optimal objective: theoretical guarantees}

In this section, we consider a rigorous argument which shows that under certain property of the recourse problems which we call {\it an approximate rotational invariance}, the optimal objective value of the two-stage stochastic problem can be approximated by decoupling the first and second stages.
In what follows, denote by $\tilde x$ an optimal solution
to the first stage problem
\begin{equation} \label{nd1}
\begin{split}
    \text{maximize}\  & \langle c,x \rangle  \\
    \text{subject to } & Ax \leq b \\
    & x \geq 0.
\end{split}
\end{equation}
Let us describe precisely the assumptions we will employ.

\medskip

\begin{assumption*}\label{NRAsmpt}
Let $\varepsilon$ be a small positive parameter. We assume all of the following.
\begin{itemize}
\item[(a)] {\it{}First stage solutions:} The first stage solution set is feasible and bounded, and for each $\tau\geq 0$, the {\it convex} quadratic
optimization problem
\begin{equation}\label{decsptau1}
\begin{split}
    &\text{maximize} \  \ \langle c,x_\tau \rangle \\
    &\text{subject to }\ Ax_\tau \leq b \\
    & \|x_\tau\|_2 \leq \tau \\
    & x_\tau \geq 0
\end{split}
\end{equation}
is feasible and has a unique solution;
\item[(b)] {\it{}Feasibility and boundedness of the recourse problem:} For every realization of $\xi$ and for every
choice of a vector
$x\in\R^{n_1}$ with $\|x\|_2\leq
\max\{\|x'\|_2:\;Ax'\leq b,\,x'\geq 0\}$, the recourse problem (\ref{sp_recourse})
is feasible and bounded;
\item[(c)] {\it{}Approximate rotational invariance:}
For every
choice of a vector
$x\in\R^{n_1}$ with $\|x\|_2\leq
\max\{\|x'\|_2:\;Ax'\leq b,\,x'\geq 0\}$,
$$
\big|\Exp_{\boldsymbol{\xi}}\,Q(x,\boldsymbol{\xi})-
\Exp_{\boldsymbol{\xi}}\,Q((\|x\|_2,0,\dots,0)^\top,\boldsymbol{\xi})\big|
\leq \varepsilon.
$$
\item[(d)] {\it{}Approximate monotonicity of $\Exp_{\boldsymbol{\xi}} Q((\cdot,0,\dots,0)^\top, \boldsymbol{\xi})$
on an interval}:
The mapping
$$\tau\longrightarrow \Exp_{\boldsymbol{\xi}}[Q((\tau,0,\dots,0)^\top, \boldsymbol{\xi})]$$
satisfies
$$
\Exp_{\boldsymbol{\xi}}[Q((\tau_1,0,\dots,0)^\top, \boldsymbol{\xi})]
\geq \Exp_{\boldsymbol{\xi}}[Q((\tau_2,0,\dots,0)^\top, \boldsymbol{\xi})]
-\varepsilon,
$$
whenever
$\|\tilde x\|_2\leq
\tau_1\leq\tau_2\leq
\max\{\|x\|_2:\;Ax\leq b,\,x\geq 0\}$.
\end{itemize}
\end{assumption*}
Observe that the assumption (b) above is standard, and that the assumption (a) can be realized for any bounded feasible first-stage problem by adding a small random perturbation to the cost vector $c$. Thus, the main conditions above are properties (c) and (d). It is not clear apriori if property (c) is satisfied 
for any stochastic program except for the degenerate case where the technology matrix is zero. However, there is a strong indication that the approximate rotational invariance holds for recourse problems with matrices $T(\xi)$ which are realizations of {\it random Gaussian matrices} (see a remark after the proof below).

\smallskip

We have the following result:
\begin{prop}
Assume that conditions (a)--(d) above
are satisfied for some choice of $\varepsilon$.
Consider the quantity
$$
\hat z^*:=\max\big\{\langle c,\tilde x_\tau \rangle+\Exp_{\boldsymbol{\xi}}
[Q((\|\tilde x_\tau\|_2,0,\dots,0)^\top ,\boldsymbol{\xi})],\;
0\leq\tau
\leq\max\{\|x\|_2:\;Ax\leq b,\,x\geq 0\}\big\},
$$
where for each $\tau$, $\tilde x_\tau$ is the optimal solution to the convex
problem \eqref{decsptau1},
and $Q((\|\tilde x_\tau\|_2,0,\dots,0)^\top,\xi)$ 
is given by \eqref{sp_recourse} with the vector $(\|\tilde x_\tau\|_2,0,\dots,0)^\top$
in place of $x$.
Then $|z^*-\hat z^*|\leq 2\varepsilon$.
\end{prop}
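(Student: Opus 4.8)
The plan is to sandwich $z^*$, the optimal value of \eqref{sp}, between $\hat z^*-\varepsilon$ and $\hat z^*+2\varepsilon$. Set $R:=\max\{\|x\|_2:\,Ax\le b,\,x\ge 0\}$, which is finite by the boundedness in (a), and write $P:=\{x:\,Ax\le b,\,x\ge 0\}$. Since every $x\in P$ has $\|x\|_2\le R$, the feasible region of \eqref{decsptau1} with $\tau=R$ is exactly $P$, so by the uniqueness in (a) the vector $\tilde x=\tilde x_R$ is the unique maximizer of $x\mapsto\langle c,x\rangle$ over $P$. I would first record one structural fact: if $\|\tilde x_\tau\|_2<\tau$ for some $\tau\in[0,R]$, then $\tilde x_\tau=\tilde x$. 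Indeed, if some $y\in P$ satisfied $\langle c,y\rangle>\langle c,\tilde x_\tau\rangle$, then for all small $t>0$ the point $(1-t)\tilde x_\tau+ty$ lies in $P$ (convexity) and has norm $<\tau$ (continuity, using $\|\tilde x_\tau\|_2<\tau$), hence is feasible for \eqref{decsptau1}, while having strictly larger objective --- contradicting optimality of $\tilde x_\tau$. So $\tilde x_\tau$ maximizes $\langle c,\cdot\rangle$ over $P$, and uniqueness forces $\tilde x_\tau=\tilde x_R=\tilde x$.

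For the lower bound $z^*\ge\hat z^*-\varepsilon$ I would use only (a)--(c). For any $\tau\in[0,R]$ the vector $\tilde x_\tau$ lies in $P$, hence is feasible for \eqref{sp}, and $\|\tilde x_\tau\|_2\le R$; by (b) the relevant recourse problems are feasible and bounded, and (c) applied to $x=\tilde x_\tau$ gives
$$
\Exp_{\boldsymbol{\xi}}Q(\tilde x_\tau,\boldsymbol{\xi})\ge\Exp_{\boldsymbol{\xi}}Q((\|\tilde x_\tau\|_2,0,\dots,0)^\top,\boldsymbol{\xi})-\varepsilon .
$$
Hence $z^*\ge\langle c,\tilde x_\tau\rangle+\Exp_{\boldsymbol{\xi}}Q((\|\tilde x_\tau\|_2,0,\dots,0)^\top,\boldsymbol{\xi})-\varepsilon$, and taking the maximum over $\tau\in[0,R]$ yields $z^*\ge\hat z^*-\varepsilon$.

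For the upper bound $z^*\le\hat z^*+2\varepsilon$, fix $\delta>0$ and choose $x\in P$ with $\langle c,x\rangle+\Exp_{\boldsymbol{\xi}}Q(x,\boldsymbol{\xi})\ge z^*-\delta$; put $r:=\|x\|_2\in[0,R]$. Since $x$ is feasible for \eqref{decsptau1} with $\tau=r$, we have $\langle c,x\rangle\le\langle c,\tilde x_r\rangle$, and (c) gives $\Exp_{\boldsymbol{\xi}}Q(x,\boldsymbol{\xi})\le\Exp_{\boldsymbol{\xi}}Q((r,0,\dots,0)^\top,\boldsymbol{\xi})+\varepsilon$, so
$$
z^*-\delta\le\langle c,\tilde x_r\rangle+\Exp_{\boldsymbol{\xi}}Q((r,0,\dots,0)^\top,\boldsymbol{\xi})+\varepsilon .
$$
Now I split into two cases. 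If $\|\tilde x_r\|_2=r$, the right-hand side is exactly $\langle c,\tilde x_r\rangle+\Exp_{\boldsymbol{\xi}}Q((\|\tilde x_r\|_2,0,\dots,0)^\top,\boldsymbol{\xi})+\varepsilon$, which is at most $\hat z^*+\varepsilon$ (take $\tau=r$ in the definition of $\hat z^*$). If $\|\tilde x_r\|_2<r$, then by the structural fact $\tilde x_r=\tilde x$, so $\|\tilde x_r\|_2=\|\tilde x\|_2\le r\le R$; applying (d) with $\tau_1=\|\tilde x\|_2$ and $\tau_2=r$ gives $\Exp_{\boldsymbol{\xi}}Q((r,0,\dots,0)^\top,\boldsymbol{\xi})\le\Exp_{\boldsymbol{\xi}}Q((\|\tilde x_r\|_2,0,\dots,0)^\top,\boldsymbol{\xi})+\varepsilon$, so the right-hand side is at most $\langle c,\tilde x_r\rangle+\Exp_{\boldsymbol{\xi}}Q((\|\tilde x_r\|_2,0,\dots,0)^\top,\boldsymbol{\xi})+2\varepsilon\le\hat z^*+2\varepsilon$. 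In either case $z^*-\delta\le\hat z^*+2\varepsilon$; letting $\delta\to 0$ and combining with the lower bound proves $|z^*-\hat z^*|\le 2\varepsilon$.

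I expect the main obstacle to be the case $\|\tilde x_r\|_2<r$ in the upper bound: there the reduction of the recourse objective via (c) produces the argument $r$, which does not match the argument $\|\tilde x_r\|_2$ appearing in $\hat z^*$, so one cannot simply compare term by term. The structural fact --- that a non-binding norm constraint forces $\tilde x_r$ to coincide with the plain LP optimum $\tilde x$ --- is what lets the approximate monotonicity (d) close this gap, and it is precisely this extra invocation of (d) that turns the clean bound $\varepsilon$ into $2\varepsilon$.
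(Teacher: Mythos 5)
Your proof is correct and follows essentially the same route as the paper's: approximate rotational invariance applied in both directions, the linear-interpolation argument showing the norm constraint is binding (equivalently, that $\tilde x_\tau=\tilde x$ when it is slack), and one extra invocation of approximate monotonicity that accounts for the $2\varepsilon$. The only cosmetic differences are that you organize the case split around whether $\|\tilde x_r\|_2=r$ rather than comparing $\|x^*\|_2$ with $\|\tilde x\|_2$, and that you work with a $\delta$-approximate optimizer instead of assuming an exact one.
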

\begin{proof}
Let $\tau\geq 0$ be such that
$$
\hat z^*=
\langle c,\tilde x_\tau \rangle+\Exp_{\boldsymbol{\xi}}
[Q((\|\tilde x_\tau\|_2,0,\dots,0)^\top , \boldsymbol{\xi})].
$$
By the approximate rotational invariance property,
$$
\hat z^*\leq \langle c,\tilde x_\tau \rangle+\Exp_{\boldsymbol{\xi}}
[Q(\tilde x_\tau , \boldsymbol{\xi})]+\varepsilon,
$$
and hence
$$
\hat z^*\leq z^*+\varepsilon.
$$

Conversely, let $x^*$ be an optimal solution of \eqref{sp}, so that
$$
z^*= \langle c,x^* \rangle+\Exp_{\boldsymbol{\xi}}
[Q(x^* , \boldsymbol{\xi})].
$$
Again applying the approximate rotational invariance, we get
$$
z^*\leq \langle c,x^* \rangle+\Exp_{\boldsymbol{\xi}}
[Q((\|x^*\|_2,0,\dots,0)^\top , \boldsymbol{\xi})]+\varepsilon.
$$
If $\|x^*\|_2 \leq\| \tilde x\|_2$ then, setting $\tau:=\|x^*\|_2$,
we get $\|\tilde x_\tau\|_2=\tau$ (to verify that, assume for contradiction that $\|\tilde x_\tau\|_2<\tau$ and
consider a linear interpolation between $\tilde x_\tau$ and $\tilde x$), and
hence
$$
\langle c,x^* \rangle+\Exp_{\boldsymbol{\xi}}
[Q((\|x^*\|_2,0,\dots,0)^\top , \boldsymbol{\xi})]
\leq \langle c,\tilde x_\tau \rangle+\Exp_{\boldsymbol{\xi}}
[Q((\|\tilde x_\tau\|_2,0,\dots,0)^\top , \boldsymbol{\xi})]
\leq \hat z^*,
$$
implying that
$$
z^*\leq \hat z^*+\varepsilon.
$$
If $\|x^*\|_2 >\| \tilde x\|_2$ then, by the approximate monotonicity,
$$
\langle c,x^* \rangle+\Exp_{\boldsymbol{\xi}}
[Q((\|x^*\|_2,0,\dots,0)^\top , \boldsymbol{\xi})]
\leq \langle c,\tilde x \rangle+\Exp_{\boldsymbol{\xi}}
[Q((\|\tilde x\|_2,0,\dots,0)^\top , \boldsymbol{\xi})]+\varepsilon,
$$
and
$$
z^*\leq \hat z^*+2\varepsilon.
$$
The result follows.
\end{proof}

\begin{remark}
Assume that $\{T(\xi)\}_{\xi\in\cS}$ are realizations of independent identically distributed random matrices with i.i.d centered Gaussian entries in every row. Recall that any linear combination of a set of jointly Gaussian random variables is Gaussian.
Therefore, if $x$ and $x'$ are any pair of fixed vectors in $\R^{n_1}$ having the same Euclidean norm, then the sets of random vectors
\[ \left\{ T(\xi)x \right\}_{\xi}
\ \text{and} \ \left\{ T(\xi)x' \right\}_{\xi} \]
are {\it equidistributed}. This suggests that the expected objective value of the recourse problem in this setting is influenced essentially by the norm of a vector $x$ and not by its direction. 
Therefore, we can ``replace'' any $x$ in the recourse problem by the first standard basis vector times the norm $\|x\|_2$, without significantly altering the optimal objective of the entire two-stage program. Although the above reasoning is not rigorous, our experimental results (see further) suggest that recourse problems with Gaussian technology and recourse matrices indeed have an approximate rotational invariance property.
\end{remark}

\begin{remark}
In practical implementations of the above
decoupling procedure, instead of considering a continuum
of admissible values of $\tau\in[0,
\max\{\|x\|_2:\;Ax\leq b,\,x\geq 0\}]$ we will use
a discretization $\tau=k\Delta$, $1\leq k\leq K$,
where $\Delta>0$ is sufficiently small, and
$K\in\N$ is chosen in such a way that $K\Delta\geq \|x^*\|_2$.
For each choice of $\tau=k\Delta$, we compute
the optimal solution $x_\tau^*$ to the program \eqref{decsptau1},
and then estimate $\Exp_{\boldsymbol{\xi}}[Q((\|x_\tau^*\|_2,0,\dots,0) ,\boldsymbol{\xi})]$.
\end{remark}

\section{The decoupling algorithm} 

We will assume that our stochastic program satisfies assumptions (a)--(d) from the previous section, for some small $\varepsilon>0$.
Let us start by summarizing the main points of the decoupling procedure, before presenting a formal algorithm.
The decoupling procedure for the first and second stages involves the following steps.
\begin{enumerate}
\item   {\it{} Discretizing the norm of the first stage vector $x$:}
        For a discretization step size $\Delta$, and for any integer $k$ in a certain interval,
we consider the constraint $\|x\|_2 \leq \tau$ together with the first stage constraints of \eqref{sp}, to obtain the optimization problem \eqref{decsptau1}. The discretization step ensures that the optimal objective $z^*$ of \eqref{sp} can be approximated using our method in a finite number of steps.
         
\item   {\it{} Solving the first stage problem:}
        With the additional constraint $\|x\|_2 \leq \tau$, where $\tau = k \Delta$, we solve the first stage problem above to find an optimal solution $\tilde x_\tau$. This step involves optimizing the {\it convex} nonlinear program \eqref{decsptau1} (the convexity ensures that the problem can be solved efficiently).

\item   {\it{} Fixing the first stage variable in the recourse problem:}
        In the recourse problem, we fix the variable $x$ to the optimal solution $\tilde x_\tau$ obtained at the previous step:
        \begin{equation} \label{df2}
        \begin{split}
            &\text{maximize} \  \ \Exp_{\boldsymbol{\xi}} \left [ \langle \langle q(\boldsymbol{\xi}),y(\boldsymbol{\xi}) \rangle \right ] \\
            &\text{subject to }\  W(\xi)y(\xi) \leq h(\xi) - T(\xi)\tilde x_\tau \\
            & y(\xi) \in \R^{n_2}\\
            & \xi\in \cS.
        \end{split}
        \end{equation}
\item  {\it{}  Reformulating the recourse problem:}
        The recourse problem \eqref{df2} involves $-T(\xi)\tilde x_\tau$ on the right hand side. Given the assumption of approximate rotational invariance, $T(\xi)\tilde x_\tau$ can be replaced with $T(\xi)
        \left(\|\tilde x_\tau\|_2,  0, \dots, 0 \right )^\top$
        at expense of only a small distortion of the optimal objective of the second stage. As a result, a modified problem (\ref{df2}) reads as follows 
        \begin{equation} \label{df2_2}
        \begin{split}
            &\text{maximize} \ \ \Exp_{\boldsymbol{\xi}} \left [ \langle q(\boldsymbol{\xi}),y(\boldsymbol{\xi}) \rangle \right ] \\
            &\text{subject to }\  W(\xi)y({\xi}) \leq h(\xi) - \col_1{(T(\xi))} \|\tilde x_\tau\|_2 \\
            & y(\xi) \in \R^{n_2}\\
            & \xi\in \cS.
        \end{split}
        \end{equation}
        where $\col_1{(T(\xi))}$ is the first column of the matrix $T(\xi)$.
Note that this crucial step essentially decouples the first and the second stage variables as the solution of the modified recourse problem only depends on the Euclidean norm of $\tilde x_\tau$ and not its direction.
Furthermore, since for $\tau\leq \|\tilde x\|_2$ we have $\|\tilde x_\tau\|_2=\tau$, in that range of $\tau$ the decoupling is complete, and the problems \eqref{decsptau1} and \eqref{df2_2} can be solved in parallel.
\end{enumerate}

\noindent The decoupling procedure is presented in Algorithm \ref{alg 1}. In this algorithm, $K$ is an upper bound for $k$ which is initialized with a sufficiently large number and is updated at the beginning of the algorithm according to the norm of optimal objectives of \eqref{decsptau1}. Further, $S$ is the set of scenarios, $Z_1$ and $Z_2$ are vectors for storing first and second stage optimal objectives, respectively, and $X$ is a vector indexed over $[0,\dots,K]$ that stores the norms of $\tilde x_{\Delta\,k}$, for each admissible $k$.  

\begin{algorithm}[h!]
    \SetAlgoLined
    \KwIn{ $A, W, T, b, h, c, q$; $\Delta$ (a discretization step); $K$ (a sufficiently large parameter)}
    \KwOut{ An approximation to the optimal objective value of (\ref{sp})}
    \For {$k \leftarrow 0$ to $K$}{
      Solve problem \eqref{decsptau1} with $\tau:=\Delta\,k$\;
      $Z_1[k] \leftarrow$ optimal objective value of \eqref{decsptau1}\;
      $X[k] \leftarrow$ norm of the optimal solution of \eqref{decsptau1}\;
    }
    $ a \leftarrow  \max_k X[k]$\; 
    $ K \leftarrow a/\Delta +2 $ (updating the upper bound $K$ to optimize the execution time)\;
    \For{$s \leftarrow 0$ to $S$}{
      \For{$k \leftarrow 0$ to $K$}{
        Compute the RHS of (\ref{df2_2}) taking the value of $\|\tilde x_{\Delta\,k}\|_2$ from $X[k]$\;
        Solve problem (\ref{df2_2}) \;
        $Z_2[s,k] \leftarrow $ optimal objective value of (\ref{df2_2})\;
      }
    }
    Compute $\Exp_{\boldsymbol{\xi}} Z_2[\boldsymbol{\xi}, k]$ for every $k$\;
    \For {$k \leftarrow 0$ to $K$}{
      $Z[k] \leftarrow \Exp_{\boldsymbol{\xi}} Z_2[\boldsymbol{\xi}, k] + Z_1[k]$\;
    }
    Return $\max_k Z(k)$\;

    \caption{The decoupling algorithm under approximate rotational invariance}
    \label{alg 1}
\end{algorithm}

\section{Computational Studies}\label{numericssection}

To evaluate the performance of the proposed algorithm, we conduct a series of tests with various sizes of matrices $A$, $T$ and $W$, and compare it with both the extensive form and Benders' decomposition. 
Recall that the extensive form combines all possible scenarios into a single large scale linear problem. The optimal objective of this problem serves as a benchmark to measure the accuracy of our algorithm.
Benders' decomposition, a widely used decomposition algorithm for two stage stochastic programs, is implemented as a second benchmark. We evaluate the Benders decomposition in terms of computational time.
We solve the extensive form and the subproblems of Benders' decomposition and our decoupling algorithm with help of the optimization solver Gurobi 10.1.1.

The third benchmark, which we refer to as the {\it naive decoupling}, consists in fixing the first stage variables to the optimal solution of problem (\ref{nd1}), denoted as $\tilde{x}$, and solving the recourse problems for that choice of the first-stage variables: 
\begin{equation} \label{nd2}
\begin{split}
    \text{maximize}\  & \langle c,\tilde{x} \rangle + \sum_{\xi\in\cS} \Prob(\xi)  \langle q(\xi), y(\xi) \rangle  \\
    \text{subject to } \
    & T(\xi) \tilde{x} +W(\xi) y(\xi) \leq h(\xi) \\
    & y(\xi) \geq 0.
\end{split}
\end{equation}
By comparing the optimal objective value of (\ref{nd2}) with that of the extensive form, we gain insights into the parameter configurations where our algorithm proves to be effective.

These metrics are useful for assessing the relative efficiency of our proposed algorithm.
The computational results are presented in Table \ref{tab:1}.
In the table, $m_1$ and $n_1$ are the number of rows and columns of the matrix $A$, and  $m_2$ and $n_2$ are the number of rows and columns of the matrix $W$. 
We consider constant vectors $h$ and vary the magnitude of the components for different choices of the dimension of the matrices.
{\it{}Ngap} is the relative gap in percentage between the optimal objective of problem (\ref{nd2}) and the optimal objective of the extensive form, while {\it Dgap} is the relative gap in percentage between the optimal objective obtained by our algorithm and the optimal objective of the extensive form. 
Columns $t_e$, $t_d$ and $t_b$ denote the computational times for solving the extensive form, executing our algorithm and Benders decomposition algorithm respectively.

Input data for the experiments in Table \ref{tab:1} are as follows: 
\begin{itemize}
\item $b = \mathbf{1}$, $\Delta = 0.01$, $K=100$
\item $A$ is a realization of a standard Gaussian matrix. The realization is fixed and remains the same in all experiments.
\item $c$, $q$ are realizations of Gaussian random vectors normalized to have $\|c\|_2 = 0.5$ and $\|q\|_2=1$. The realizations are fixed and remain the same in all experiments.
\item $T(\xi)$, $W(\xi)$ are realizations of independent standard Gaussian matrices.
\item Number of scenarios = sample size = 50
\item Each number in the 6th to 10th column of the table is the empirical average over 50 runs for the given parameters.
\item Compared to the description in Algorithm~\ref{alg 1}, the range of $k$ in the main cycle for the second stage problem is optimized to reduce the computational time.
\end{itemize}


From the table, we observe that Dgap is of order $2 \%$, indicating that the optimal objective of our decoupling algorithm is in close proximity to that of the extensive form solution. 
We must note that in certain parameter settings, the approximation accuracy of our algorithm matches that of the naive decoupling. For example, when the number of the first stage variables is small, both the naive approach and our decoupling algorithm yield similar objective values. Determining the range of parameters where our algorithm significantly outperforms the naive decoupling remains an open question. 

Although solving the extensive form exhibits better performance in terms of the computational time, our decoupling algorithm outperforms Benders' decomposition for some choice of parameters.
For fair comparison we run Benders' decomposition algorithm up to $2\%$ gap between the objective obtained from the master problem and the objective of current incumbent.

\bigskip

\begin{table}[h!]
\caption{ Performance of our algorithm compared with extensive form, Benders' decomposition, and naive decoupling. Columns $t_e$, $t_d$ and $t_b$ are computational times for solving the extensive form, our decoupled model and Benders decomposition algorithm respectively. }
\label{tab:1}
\centering
\begin{tabular}{cccccccccc}
$m_1$& $n_1$ & $m_2$ & $n_2$ & $h$ & Ngap & Dgap  & $t_e$ & $t_d$ & $t_b$\\
\hline \\
100 & 5 & 100 & 5   & 2 & 8.1 & 2.1 & 0.05 & 0.41 & 0.41 \\
100 & 10 & 100 & 10 & 2 & 11.7 & 2.3 & 0.18 & 0.75 & 2.1 \\
100 & 15 & 100 & 15 & 2 & 34.3 & 2.1 & 0.2 & 1.2 & 5.18\\
100 & 20 & 100 & 20 & 2 & 20.7 & 2.8 & 0.45 & 1.6 & 10.4 \\
100 & 5 & 100 & 5   & 3 & 3.3 & 1.8 & 0.06 & 0.43 & 0.24\\
100 & 10 & 100 & 10 & 3 & 5.6 & 2.0 & 0.17 & 0.78 & 1.1\\
100 & 15 & 100 & 15 & 3 & 13.6 & 2.1 & 0.21 & 1.3 & 3.2 \\
100 & 20 & 100 & 20 & 3 & 9.3 & 2.5 & 0.45 & 1.7 & 6.1\\
100 & 5 & 100 & 5   & 4 & 2.1 & 2.1 & 0.06 & 0.46 & 0.2 \\
100 & 10 & 100 & 10 & 4 & 3.3 &2.0 & 0.16 & 0.73 & 0.8 \\
100 & 15 & 100 & 15 & 4 & 7.7 & 2.2 & 0.22 & 1.2 & 1.8\\
100 & 20 & 100 & 20 & 4 & 5.6 & 2.1 & 0.41 & 1.6 & 2.4\\
100 & 5 & 100 & 5   & 5 & 1.3 & 2.1 & 0.06 & 0.48 & 0.2\\
100 & 10 & 100 & 10 & 5 & 2.2 & 1.9 & 0.17 & 0.76 & 0.6\\
100 & 15 & 100 & 15 & 5 & 5.4 & 2.2 & 0.2 & 1.1 & 1.1\\
100 & 20 & 100 & 20 & 5 & 4.1 & 2.8 & 0.4 & 1.5 & 1.5\\

\end{tabular}
\end{table}

\section{Conclusion}

In this paper, we exploit the structure of the technology matrix to effectively decouple the first and second stages of two-stage stochastic programs. Our theoretical guarantee and empirical studies, demonstrate that when the technology matrix satisfy an approximate rotational invariance property, it is possible to decouple the two stage stochastic program efficiently. The optimal objectives of our proposed algorithm are in close proximity to the optimal objectives of the extensive form solution.
Further studies could involve testing the algorithm on larger problem instances. The question of choosing parameters when our decoupling algorithm significantly outperforms the naive decoupling remains open. 
We hope that our findings will inspire the discovery of new decomposition techniques leveraging the structure of the technology matrix.




\bibliographystyle{plainnat}
\bibliography{References}

\end{document}